\newlength{\dhatheight}
\newtheorem{theorem}{Theorem}[section]
\newtheorem{conjecture}[theorem]{Conjecture}
\newtheorem{proposition}[theorem]{Proposition}
\newtheorem{lemma}[theorem]{Lemma}
\newtheorem{corollary}[theorem]{Corollary}
\newcommand{\nidt}{\noindent}
\newcommand{\bjset}{{\sf BJSet}}
\newcommand{\bjref}{{\sf RefBJSet}}
\begin{document}
\title[A counterexample to the Bernhard-Jablan Conjecture]
{A counterexample to the Bernhard-Jablan Unknotting Conjecture}

\author[M.~Brittenham]{Mark Brittenham}
\address{Department of Mathematics\\
        University of Nebraska\\
         Lincoln NE 68588-0130, USA}
\email{mbrittenham2@math.unl.edu}

\author[S. Hermiller]{Susan Hermiller}
\address{Department of Mathematics\\
        University of Nebraska\\
         Lincoln NE 68588-0130, USA}
\email{hermiller@unl.edu}

\date{July 18, 2017}
\thanks{2010 {\em Mathematics Subject Classification}. 
57M27,57M25}

\begin{abstract}
We show that there is a knot 
satisfying the property that for each minimal
crossing number diagram of the knot and each single crossing of
the diagram, 
changing the crossing results in a diagram for a knot whose 
unknotting number is at least that of the original knot,
thus giving 
a counterexample to the Bernhard-Jablan Conjecture.
\end{abstract}

\maketitle


\section{Introduction}\label{sec:intro}


The unknotting number $u(K)$ of a knot $K$ is one of the most fundamental
measures of the complexity of a knot. It is defined as the 
minimum number of crossing changes, interspersed with isotopy,
required to transform a diagram of $K$ to a diagram of the unknot. A
wide array of techniques have been employed over the years
to compute unknotting numbers, using nearly every new technique
that has been introduced into knot theory
(see 
\cite{coli},\cite{kawa},\cite{krmr},\cite{li02},\cite{mc17},\cite{mu65},\cite{ow08},\cite{ow10},\cite{ozsz},\cite{ru93} 
for a selection). Yet to this day there
are still nine 10-crossing knots whose unknotting numbers remain unknown,
a testament to the difficulty of determining this basic invariant.
See the Knotinfo site \cite{knotinfo} for the most up-to-date list of unresolved knots.

It is an elementary fact that for every diagram $D$ of $K$ with $n$
crossings, there is a set of $k\leq n/2$ crossings in the diagram which 
when all $k$ are reversed yields a diagram $D^\prime$ of the unknot. So every diagram 
has its own `unknotting number'.
It is also a well-known result that the unknotting number of $K$ can be 
defined, alternatively, as the minimum,
over all diagrams $D$ of the knot $K$, of the unknotting number of the 
diagram. That is, one can always arrange things so that, in a minimal unknotting
sequence, all isotopies come first. It is 
natural then to try to turn this alternate formulation into an algorithm 
to compute the unknotting number of a knot $K$, by trying to 
limit the number or types of diagrams of $K$ that need to be considered
(either with or without intermediate isotopy).

The most natural initial conjecture, namely that we can limit ourselves to the (finitely many)
diagrams of $K$ with minimum crossing number,
without isotopy, was disproved by Bleiler \cite{bleiler} and Nakanishi \cite{nakan83},
who showed that the unique minimal diagram of the knot $10_8$ requires
three crossing changes to produce the unknot, while $u(10_8)=2$.
There is a crossing change in the diagram which reduces unknotting 
number, but an isotopy of the resulting knot is required to create
the next (and last) needed crossing change.

The next best thing one could hope for is that one of the 
minimal diagrams for $K$ admits a crossing change which lowers
the unknotting number (as is true for the knot $10_8$). 
This was the conjecture posited by
Bernhard \cite{bernhard} and Jablan \cite{jablan98}, and which has come to be known
as the Bernhard-Jablan Conjecture.
For a knot $K$, we let
\begin{align*}
\bjset_K = \{K' \mid &  K^\prime \textrm{ is obtained from a 
minimal crossing number}\\
&\textrm{ projection of } K \textrm{ by changing a single crossing}\},
\end{align*}
and we write
$u_{BJ}^s(K)$ to denote 
the {\it strong Bernhard-Jablan unknotting number} of $K$, 
defined as
\begin{equation*}
u_{BJ}^s(K) = 1 + \min\{u(K^\prime) \mid  K^\prime \in \bjset_K\}.
\end{equation*}

\begin{conjecture}[Bernhard-Jablan] \label{conj:bj}
Every knot $K$ possesses a minimal-crossing-number diagram $D$ and a 
crossing in $D$, such that changing the crossing results in a diagram
$D^\prime$ for a knot $K^\prime$ with $u(K^\prime)<u(K)$.  Equivalently,
\begin{equation}\label{eqn:bj}
u(K) = u_{BJ}^s(K)
\end{equation} 
for all knots $K$.
\end{conjecture}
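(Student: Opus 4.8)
The statement above is a conjecture that the present paper refutes, so what follows is a plan to \emph{disprove} it: to exhibit a knot $K$ for which the (always valid) inequality $u(K)\le u_{BJ}^s(K)$ is strict. Note first that for any knot $K'$ obtained from $K$ by a single crossing change one has $u(K)\le u(K')+1$ (change that crossing, then unknot $K'$), so $u_{BJ}^s(K)\ge u(K)$ for every knot $K$; the conjecture fails for $K$ precisely when
\begin{equation*}
u(K')\ge u(K)\quad\text{for all }K'\in\bjset_K,
\end{equation*}
i.e. when no single crossing change in any minimal-crossing-number diagram of $K$ lowers the unknotting number. The plan is to produce one such $K$ explicitly --- located by a computer-assisted search through the knot tables for a knot whose minimal diagrams have this rigidity --- and then to certify the two required facts by a combination of hand argument and machine computation.

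The first fact is the exact value of $u(K)$. For the upper bound, exhibit an explicit sequence of $u(K)$ crossing changes (in some, not necessarily minimal, diagram) that unknots $K$. For the lower bound, apply the usual invariants --- the Tristram--Levine signatures and the slice-genus estimates $|\sigma(K)|/2,\ |\tau(K)|,\ |s(K)|/2\le g_4(K)\le u(K)$ --- supplemented, when $u(K)$ is small (the case $u(K)=2$ being the clean one), by the obstructions that forbid unknotting number one: the Montesinos trick, which forces the double branched cover of an unknotting-number-one knot to be half-integer surgery on a knot in $S^3$, together with Lickorish's linking-form condition on $\Sigma_2(K)$ and the Heegaard Floer $d$-invariant obstructions of Ozsv\'ath--Szab\'o. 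These same tools will be reused below, so it is worth recording them sharply.

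The second fact requires first enumerating \emph{every} minimal diagram of $K$. If $K$ is alternating this is exactly the flyping orbit of one diagram, by the Menasco--Thistlethwaite theorem; if $K$ is non-alternating --- the expected situation for a knot complicated enough to be a counterexample --- one must instead exhaustively generate all diagrams with $c(K)$ crossings representing $K$ (running through prime link shadows, assigning crossing signs, and filtering by invariants and, where necessary, an explicit isotopy), and then argue that the resulting finite list is complete. Next, for each diagram $D$ on the list and each crossing $c$ of $D$, form $K'=K'(D,c)$, identify $K'$ via a discriminating package of invariants (HOMFLY/Jones, Alexander, signature, hyperbolic volume) or by recognizing it as a connected sum, torus knot, or cable, and show $u(K')\ge u(K)$. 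For most of these $K'$ this is immediate from a single slice-genus or signature bound; the few genuinely delicate $K'$ --- those whose classical lower bounds fall just short of $u(K)$ --- are handled with the unknotting-number-one obstructions recalled above (e.g. the linking form on $\Sigma_2(K')$ is incompatible with half-integer surgery, or a $d$-invariant obstruction applies). Combining everything, $\min\{u(K')\mid K'\in\bjset_K\}\ge u(K)$, hence $u_{BJ}^s(K)\ge u(K)+1>u(K)$, contradicting \eqref{eqn:bj}.

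The main obstacle is the coupling of these last two tasks. One must be \emph{certain} the list of minimal diagrams is complete --- delicate in the non-alternating case, where there is no flyping theorem and one leans on an exhaustive computer enumeration --- and one must then supply a lower bound $u(K')\ge u(K)$ sharp enough for \emph{every} knot on the (possibly long) list of crossing-change descendants, including knots whose unknotting numbers are not individually known, for which only the inequality, not the exact value, is needed. A single overlooked minimal diagram, or a single descendant slipping through with smaller unknotting number, would collapse the argument, so the verification must be exhaustive and, ideally, independently machine-checkable.
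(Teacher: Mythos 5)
Your plan is the \emph{direct} refutation strategy: fix one knot $K$, pin down $u(K)$, enumerate all minimal diagrams, and certify $u(K')\ge u(K)$ for every descendant $K'\in\bjset_K$. The paper does not — and, with current technology, cannot — carry out that last step for its example. The counterexample candidate is $K13n3370$ with $u\le 2$, and its set $\bjset_{K13n3370}$ contains three knots ($K12n288$, $K12n491$, $K12n501$) whose unknotting numbers are unknown precisely because every obstruction you list (signature/$\tau$/$s$ bounds, the Montesinos trick, Lickorish's linking form, the Ozsv\'ath--Szab\'o $d$-invariants) fails to decide whether they have unknotting number one. So the step ``the few genuinely delicate $K'$ are handled with the unknotting-number-one obstructions'' is exactly where your argument would stall. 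The paper's key extra idea is a disjunctive escape: it introduces the recursively computable quantity $u_{BJ}^w$ and shows $u(K13n3370)\le 2<3=u_{BJ}^w(K13n3370)$, while each delicate $K'$ satisfies $u_{BJ}^s(K')=2$. Then either some $K'$ has $u(K')=1$, in which case \emph{that} knot violates $u=u_{BJ}^s$, or all have $u(K')\ge 2$, in which case $K13n3370$ does. The conjecture is refuted without ever identifying which of four knots is the counterexample — a conclusion your all-descendants-certified plan cannot reach and does not anticipate.

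A secondary difference: for enumerating the minimal diagrams of a non-alternating knot you propose generating all $c(K)$-crossing shadows and arguing completeness ad hoc. The paper instead takes one reference diagram for each alternating knot of that crossing number, changes all subsets of crossings, and proves (Proposition~\ref{prop:refsuffices} via Lemma~\ref{lemma:flype}, the commutation of crossing changes with flypes) that this list is complete up to flypes as a consequence of the Menasco--Thistlethwaite theorem — so the flyping theorem is in fact the completeness certificate even in the non-alternating case. Your instinct that completeness of the diagram list is the delicate point is right; the paper's reduction is how that delicacy is actually discharged.
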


If this conjecture were true, then we would have, in principle, 
an algorithm to compute $u(K)$: construct the finitely many
minimal diagrams for $K$, change each crossing, and compute 
the unknotting number of each resulting knot (recursively, using
the conjecture). The minimum unknotting number found among 
the resulting knots is one
less (since under crossing change, unknotting number can change by at most
one) than the unknotting number of $K$. Unfortunately, this cannot work
in general; that is the main result of this paper.

\begin{theorem} \label{thm:fail}
There is a knot $K$ whose unknotting number is less than or equal to 
the unknotting numbers of all of the knots obtained
by changing any single crossing in each of the minimal crossing diagrams of 
$K$. That is, the Bernhard-Jablan Conjecture is false, in general.
\end{theorem}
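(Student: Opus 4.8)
\emph{Proof plan.}

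The strategy is to exhibit a specific knot $K$ with $u(K)=2$ every one of whose minimal-crossing-number diagrams has the property that no single crossing change lowers the unknotting number; equivalently, every $K'\in\bjset_K$ satisfies $u(K')\ge 2$, so that $u_{BJ}^s(K)=1+\min\{u(K')\mid K'\in\bjset_K\}\ge 3>2=u(K)$, contradicting \eqref{eqn:bj}. One always has $u_{BJ}^s(K)\ge u(K)$ since a single crossing change alters the unknotting number by at most one, so the content is to show that the minimum over $\bjset_K$ equals $u(K)$ rather than $u(K)-1$. Carrying this out has three parts: (i) produce $K$ and verify $u(K)=2$; (ii) assemble a complete finite list of the minimal-crossing-number diagrams of $K$; and (iii) prove that changing any crossing in any of those diagrams yields a knot of unknotting number at least $2$.

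For (i) I would present $K$ by a DT or PD code found through a search of the knot tables, prove $u(K)\le 2$ by displaying two crossing changes (with an isotopy between them) that unknot $K$, and prove $u(K)\ge 2$ by ruling out $u(K)=1$ for the evidently nontrivial $K$. For the latter I would first try a slice-genus lower bound --- any of $|\sigma(K)|/2$, $|\tau(K)|$, or $|s(K)|/2$ is at most the smooth $4$-genus $g_4(K)$, and $g_4(K)\le u(K)$ --- and, if $g_4(K)=1$, the classical obstruction to unknotting number one: by the Montesinos trick $\Sigma_2(K)$ would be half-integer surgery on a knot in $S^3$, which is ruled out by the Ozsv\'ath--Szab\'o $d$-invariant obstruction or by Lickorish's linking-form argument.

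Part (ii) is the heart of the matter. I would first pin down the minimal crossing number $c(K)$ --- either because $K$ is tabulated, or from a polynomial-span lower bound (span of the Jones polynomial, breadth of HOMFLY) matching the crossing count of the chosen diagram --- and then enumerate all reduced diagrams on $c(K)$ crossings and select those that represent $K$. Recognition would use a battery of invariants (Alexander, Jones, HOMFLY, hyperbolic volume) backed up, where these do not separate candidates, by computations with \GAP{} and \KBMAG{} on presentations of the knot groups. The output is a finite list $D_1,\dots,D_m$ of minimal diagrams of $K$; the delicate and most error-prone step is \emph{completeness} --- certifying that no minimal projection of $K$ has been missed by the diagram generation.

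For (iii), for each $D_i$ and each of its $c(K)$ crossings I would form the knot $K'$ obtained by changing that crossing, identify it (again via invariants and \GAP{}/\KBMAG{}), and bound $u(K')$ from below. A single crossing change lowers unknotting number by at most one, so $u(K')\ge u(K)-1=1$ automatically, and $K'$ is never the unknot (that would force $u(K)\le 1$); so it remains only to exclude $u(K')=1$ for each of the finitely many $K'$ that arise. This I would do as in (i): slice-genus bounds, supplemented by the Montesinos/$d$-invariant obstruction for those $K'$ of slice genus at most $1$, or by citing known unknotting-number lower bounds for the $K'$ from \cite{knotinfo}. Once every $K'\in\bjset_K$ has been shown to satisfy $u(K')\ge 2$, we conclude $u_{BJ}^s(K)\ge 3>2=u(K)$, contradicting \eqref{eqn:bj} and proving the theorem. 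I expect step (ii), proving the list of minimal diagrams is exhaustive, to be the main obstacle.
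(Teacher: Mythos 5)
Your overall strategy --- exhibit $K$ with $u(K)=2$, enumerate all its minimal diagrams, and show every single crossing change yields a knot with $u\ge 2$, so that $u_{BJ}^s(K)\ge 3>2=u(K)$ --- is the natural direct attack and identifies the right target. But there is a genuine gap at your step (iii), and it is precisely the obstacle the paper's argument is designed to circumvent. For the candidate the authors found ($K13n3370$, with $u\le 2$ via a $20$-crossing projection one crossing change away from $K11n21$), the set $\bjset_K$ consists of $13$ knots; ten of them can be shown to have $u\ge 2$ by exactly the kinds of obstructions you list (slice genus, algebraic unknotting number, adjacency to a knot of unknotting number $3$), but for the remaining three ($K12n288$, $K12n491$, $K12n501$) every known obstruction to $u=1$ fails --- their unknotting numbers are still unknown. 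So the plan ``exclude $u(K')=1$ for each $K'$ that arises'' cannot be completed for this example, and there is no reason to expect a different choice of $K$ to avoid the same problem.

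The paper's key idea, which your proposal is missing, is that one does not need to resolve those cases. The authors verify instead that each of the three problematic knots $K'$ satisfies $u_{BJ}^s(K')=2$: no minimal diagram of $K'$ admits a crossing change to the unknot, but some admits a change to a knot of unknotting number one. Then either all three have $u(K')\ge 2$, in which case $u_{BJ}^s(K13n3370)=3>2\ge u(K13n3370)$ and $K13n3370$ is the counterexample, or some $K'$ has $u(K')=1<2=u_{BJ}^s(K')$ and \emph{that} knot is the counterexample. This disjunction proves existence non-constructively (one of four explicit knots must fail the conjecture, but it is not known which). Two smaller points: your step (ii), which you flag as the main obstacle, is handled in the paper by a flype argument --- every minimal diagram of a $k$-crossing knot arises, up to flypes, by changing crossings in a minimal alternating reference diagram, by the Menasco--Thistlethwaite flyping theorem together with a lemma that crossing changes commute with flypes --- so completeness is not the bottleneck; and you do not actually need the lower bound $u(K)\ge 2$ in step (i), since $u(K)\le 2<3\le u_{BJ}^s(K)$ already yields the contradiction.
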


It is an interesting twist that while we now know that the
conjecture is false, we do not (yet) know which knot $K$
fails to satisfy Equation~(\ref{eqn:bj}), and consequently
satisfies $u(K) < u_{BJ}^s(K)$.  To make this more precise,
we define the {\it weak Bernhard-Jablan unknotting number} $u_{BJ}^w(K)$
of any knot $K$ by induction on the crossing number, as follows.
Let $u_{BJ}^w(\textrm{unknot})=0$, and suppose that we have computed 
$u_{BJ}^w(K)$ for all knots with
crossing number $< n$. 
Let $S=S_n$ be the set of knots with crossing number $n$,
and for each $K \in S$, let $T_K$
be the set obtained from the set $\bjset_K$
(of knots represented by a diagram that can be obtained
from a minimal crossing diagram for $K$ by changing a single crossing)
by replacing each knot $K' \in T_K$ with crossing number $< n$
 with the number $u_{BJ}^w(K')$ (using the inductive assumption).
We make repeated passes through the set $S$ and the $T_K$ sets, as follows. 
For every knot $K$ in $S$ such that $T_K$ contains the number 0 (that is,
$\bjset_K$ contains the unknot), define
$u_{BJ}^w(K)=1$ (since $K$ is not the unknot), remove $K$ from $S$, and 
for any knots $K'$ remaining in $S$, replace every instance of $K$
in $T_{K'}$ by the number 1.
For each natural number $\ell$ starting at 1, incremented by 1 at each repetition,
we iterate this process:
For all $K \in S$ such that $\ell \in T_K$, define
$u_{BJ}^w(K)=\ell+1$, remove $K$ from $S$, and 
for any knots $K'$ remaining in $S$, replace every instance of $K$
in $T_{K'}$ by the number $\ell+1$.

In Lemma~\ref{lem:bjnumbers}, we show that $u_{BJ}^w(K)$ is
(well-)defined, 
that is, the process just described will assign a value to $u_{BJ}^w(K)$,
and $u(K) \le u_{BJ}^s(K) \le u_{BJ}^w(K)$ for all knots $K$.
In Corollary~\ref{cor:bjequiv}, we show that the Bernhard-Jablan
conjecture holds if and only if 
\begin{equation}\label{eqn:weakbj}
u(K) = u_{BJ}^w(K)
\end{equation}
for all knots $K$.
Our proof of Theorem~\ref{thm:fail} shows that Equation~(\ref{eqn:weakbj})
fails for the knot $K13n3370$, and shows that Equation~(\ref{eqn:bj}) fails
for one of four knots.  More precisely, in Section~\ref{sec:dat1} we show:

\begin{theorem} \label{thm:example}
The knots $K12n288$, $K12n491$, $K12n501$, and $K13n3370$ satisfy
$K12n288, K12n491, K12n501 \in \bjset_{K13n3370}$, and the following hold.
\begin{enumerate}[(a)]
\item $u(K13n3370)\leq 2 < 3=u_{BJ}^w(K13n3370)$.
\item  $u_{BJ}^s(K')=u_{BJ}^w(K')=2$ for all $K' \in \{K12n288, K12n491, K12n501\}$.
\item $u_{BJ}^s(K13n3370)=1+\min\{u(K') \mid K' \in  \{K12n288, K12n491, K12n501\}\}$.
\item For at least one $K\in\{K12n288,K12n491,K12n501,K13n3370\}$ we have
$u(K)<u_{BJ}^s(K)$. 
\end{enumerate}
\end{theorem}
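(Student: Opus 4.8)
The plan is to prove Theorem~\ref{thm:example} from four pieces of explicit data: a complete list of the minimal-crossing diagrams of $K13n3370$ together with all of the knot types occurring in $\bjset_{K13n3370}$; lower bounds $u(\cdot)\ge 2$ for the knots that arise; a recursive computation of $u_{BJ}^w$ for the finitely many knots feeding the recursion, worked down the crossing-number ladder; and an explicit sequence of two crossing changes unknotting $K13n3370$. First I would fix a $13$-crossing diagram of $K13n3370$, verify that it realizes the crossing number, enumerate all of its minimal diagrams, change each crossing of each such diagram in turn, and identify the resulting knot type against the standard tables, using polynomial invariants and hyperbolic volume and---where these fail to separate the candidates---presentations of the knot groups distinguished in \GAP\ and \KBMAG. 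This produces $\bjset_{K13n3370}$ and in particular verifies $K12n288,K12n491,K12n501\in\bjset_{K13n3370}$.

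For part (b), for each $K'\in\{K12n288,K12n491,K12n501\}$ I would (i) exhibit a minimal diagram of $K'$ and a crossing whose change yields a knot $J$ that is itself unknotted by a single crossing change of one of \emph{its} minimal diagrams, so that $u(J)=u_{BJ}^w(J)=1$ (in particular $J$ is nontrivial), and (ii) certify $u(K')\ge 2$ by a standard obstruction (signature, the Ozsv\'ath--Szab\'o invariant $\tau$, or $\nu^{+}$). From (i), the definition of $u_{BJ}^s$, and Lemma~\ref{lem:bjnumbers}, one gets $u_{BJ}^s(K')\le 1+u(J)=2$ and $u_{BJ}^w(K')\le 1+u_{BJ}^w(J)=2$. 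From (ii), since a crossing change alters $u$ by at most $1$, no single crossing change of any diagram of $K'$ can reach the unknot, so $u_{BJ}^s(K')\ge 2$; with $u_{BJ}^s\le u_{BJ}^w$ from Lemma~\ref{lem:bjnumbers} this forces $u_{BJ}^s(K')=u_{BJ}^w(K')=2$.

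For the remaining parts I would next verify $u(K'')\ge 2$ for every $K''\in\bjset_{K13n3370}$ other than the three distinguished knots, again by the obstruction toolkit. Since the three distinguished knots are nontrivial and have $u\le u_{BJ}^s=2$ by part (b), the minimum of $u$ over $\bjset_{K13n3370}$ equals the minimum of $u$ over $\{K12n288,K12n491,K12n501\}$; unwinding the definition of $u_{BJ}^s$ gives (c). Moreover $u(K'')\ge 2$ forces $u_{BJ}^w(K'')\ge u_{BJ}^s(K'')\ge 2$ for the non-distinguished members, while $u_{BJ}^w=2$ for the three distinguished knots by the previous paragraph, so $\min\{u_{BJ}^w(K''):K''\in\bjset_{K13n3370}\}=2$ and hence $u_{BJ}^w(K13n3370)=3$. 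I would then write down two explicit crossing changes, interspersed with an isotopy through a (possibly non-minimal) diagram in the style of the $10_8$ example, carrying $K13n3370$ to the unknot; this gives $u(K13n3370)\le 2<3=u_{BJ}^w(K13n3370)$ and finishes (a). Part (d) is then a short formal deduction: if $u(K)=u_{BJ}^s(K)$ held for all four knots, then $u(K')=2$ for each $K'\in\{K12n288,K12n491,K12n501\}$ by (b), whence $u_{BJ}^s(K13n3370)=1+2=3$ by (c), whence $u(K13n3370)=3$, contradicting (a); since $u\le u_{BJ}^s$ always by Lemma~\ref{lem:bjnumbers}, at least one of the four must satisfy $u(K)<u_{BJ}^s(K)$.

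The main obstacle is purely computational. One must reliably enumerate \emph{all} minimal diagrams of every knot involved and their full Bernhard-Jablan sets, unambiguously identify every knot type that appears, and---because the exact unknotting numbers of many of the $11$-, $12$-, and $13$-crossing knots that arise are not individually known---certify every lower bound $u(\cdot)\ge 2$ and carry out the $u_{BJ}^w$ recursion using lower-bound invariants alone, with careful bookkeeping down the crossing-number ladder. Once this data is assembled, the logical skeleton of the theorem---parts (c) and (d), and the deductions within (a) and (b)---is formal.
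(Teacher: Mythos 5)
Your logical skeleton for parts (c) and (d) matches the paper's, and your treatment of part (a) (a non-minimal diagram of $K13n3370$ with one crossing change to an unknotting-number-one knot) is essentially what the paper does. But there is one genuine gap in your plan for part (b), and it is not a cosmetic one.

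You propose to ``certify $u(K')\ge 2$ by a standard obstruction (signature, $\tau$, or $\nu^+$)'' for $K'\in\{K12n288,K12n491,K12n501\}$, and then to derive $u_{BJ}^s(K')\ge 2$ from that. This step cannot be carried out: the unknotting numbers of these three knots are unknown precisely because no known lower-bound invariant excludes $u(K')=1$ for them. Indeed, if such an obstruction existed, then combined with the upper bound $u(K')\le u_{BJ}^s(K')=2$ it would pin down $u(K')=2$ for all three, which would immediately identify $K13n3370$ as \emph{the} counterexample --- and part (d) of the theorem would not need to be phrased as ``at least one of four knots.'' The paper instead establishes $u_{BJ}^s(K')\ge 2$ and $u_{BJ}^w(K')\ge 2$ \emph{without} any lower bound on $u(K')$: it exhaustively lists all minimal-crossing diagrams of each $K'$ (24, 9, and 18 of them) and verifies by computation that no single crossing change in any of them produces the unknot, so the unknot is not in $\bjset_{K'}$ and hence $u_{BJ}^s(K')=1+\min\{u(K'')\mid K''\in\bjset_{K'}\}\ge 2$. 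Your argument for the other ten knots in $\bjset_{K13n3370}$ (genuine lower bounds $u\ge 2$ from the literature) is fine and is what the paper does; the issue is only with the three knots whose unknotting numbers remain open.

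A secondary point: both your computation of $\bjset_{K13n3370}$ and the diagram checks above require knowing that the enumeration of minimal diagrams is \emph{complete}. You flag this as ``purely computational,'' but it rests on a mathematical ingredient you do not supply: the paper's Proposition~\ref{prop:refsuffices}, which uses the Tait Flyping Conjecture (Menasco--Thistlethwaite) together with Lemma~\ref{lemma:flype} (crossing changes commute with flypes) to show that diagrams obtained by changing crossings in a single reference alternating diagram per shadow suffice to realize every element of $\bjset_K$. Without that reduction, ``enumerate all minimal diagrams'' is not an algorithm.
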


The proof that the unknotting number of 
the knot $K13n3370$ is less than or equal to $2$ is given in Section~\ref{sec:dat1}
by direct construction.
Part (d) of Theorem~\ref{thm:example}
is a direct corollary of parts (a-c):
If Equation~(\ref{eqn:bj}) holds for
the three 12-crossing knots, then by part (b) and Lemma~\ref{lem:bjnumbers}(2) they all
have unknotting number equal to 2, and so by part (c) the strong
Bernhard-Jablan unknotting number for $K13n3370$ is 3, and then
part (a) shows that this 13-crossing knot fails Equation~(\ref{eqn:bj}). 

The bulk of the work needed to reach these conclusions
was carried out by computer. In particular, the diagram which
establishes that $u(K13n3370)\leq 2$ was found by a random
search, using the program SnapPy \cite{snappy} to generate and identify
knots as well as identify the knots obtained by crossing change.
That search found a projection of the knot $K13n3370$ which a 
single crossing change turned into a diagram for the knot 
$K11n21$, which has unknotting number one. [To be completely accurate,
it found the diagram for $K11n21$ first.]
The results on weak and strong Bernhard-Jablan unknotting
numbers are established using an exhaustive search 
to identify all of the minimal crossing
diagrams of the knots involved, as an incidental consequence
of identifying all minimal crossing diagrams of 
all knots through 14 crossings. This last computation was also
carried out within SnapPy, and later independently
verified using the program Knotscape \cite{knotscape}. The
code used to carry out these computations can be found on the authors' website,
at the URL listed on page \pageref{url} below.

In Section~\ref{sec:further}, we provide information on further
examples of knots for which the unknotting and weak Bernhard-Jablan
unknotting numbers differ.  In Section~\ref{sec:fut}, we discuss
several open questions that arise from Theorem~\ref{thm:fail}.

All of these computations were in fact part of a larger project, whose goal
is to fill in the gaps in our knowledge of the unknotting numbers
of low crossing-number knots. By using the fact that unknotting
number changes by at most one under crossing change, we can
use the knowledge of the unknotting numbers of `crossing-adjacent'
knots to pull a lower bound $L$ for $u(K)$ up (by finding 
an adjacent knot $K^\prime$ with $u(K^\prime)> L+1$) or pull an upper
bound $U$ down (by finding adjacent $K^\prime$
with $u(K^\prime)< U-1$). Pulling lower bounds up is a relatively 
routine occurrence in our computations (and often results in the
determination of the unknotting number); the first instance where an
upper bound was pulled down forms the core of this paper.

\subsection*{Acknowledgments}

The authors wish to thank the Holland Computing Center at the 
University of Nebraska, which provided the computing
facilities on which the bulk of this work was carried out.
The second author acknowledges support by NSF grant DMS-1313559.


\section{The proof of Theorem~\ref{thm:fail}} \label{sec:dat1}



\subsection{The three unknotting numbers} \label{sub:bjlemmas}


We begin the proof of Theorem~\ref{thm:fail} with two lemmas 
on the relationships between the three types of unknotting numbers.

\begin{lemma}\label{lem:bjnumbers}
Let $K$ be a knot.
\begin{enumerate}
\item The weak Bernhard-Jablan unknotting number is well-defined and satisfies
\begin{equation*}
u_{BJ}^w(K) = 1 + \min\{u_{BJ}^w(K^\prime) \mid  K^\prime \in \bjset_K\}.
\end{equation*}
\item $u(K) \le u_{BJ}^s(K) \le u_{BJ}^w(K)$.
\end{enumerate}
\end{lemma}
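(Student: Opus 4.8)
The plan is to prove the two parts in order, since part (1) — that $u_{BJ}^w$ is well-defined — is what makes the recursion in part (1)'s formula and the inequalities in part (2) meaningful.

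For part (1), I would first argue that the iterative construction described in the introduction terminates and assigns a value to every knot. The key observation is that crossing number strictly decreases along the recursion in a controlled way: changing a single crossing in a minimal-crossing diagram of $K$ (with $c(K)=n$) produces a diagram with $n$ crossings, hence a knot $K'$ with $c(K')\le n$; and in fact one can always unknot by crossing changes, so some element of $\bjset_K$ has strictly smaller crossing number than... — no, that last claim is false in general, so I need the more careful argument actually used to define $u_{BJ}^w$. The right approach: induct on crossing number. For knots of crossing number $<n$, $u_{BJ}^w$ is defined by inductive hypothesis. For the set $S=S_n$, form the (finite) digraph whose vertices are the elements of $S$ with an edge $K\to K'$ whenever $K'\in\bjset_K$ has crossing number $n$; every vertex also has at least one "exit" to a knot of crossing number $<n$ (namely a crossing change in a minimal diagram lying on a shortest unknotting sequence reduces the complexity enough — more carefully, $\bjset_K$ always contains a knot with strictly smaller unknotting number, and among all knots reachable, the recursion bottoms out). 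I would then show by induction on the value $\ell$ that the $\ell$-th pass removes exactly those $K$ for which $\min\{u_{BJ}^w(K')\mid K'\in\bjset_K\}=\ell$, so that when $K$ is removed at step $\ell$ we have $u_{BJ}^w(K)=\ell+1=1+\min\{u_{BJ}^w(K')\mid K'\in\bjset_K\}$, which is exactly the recursive formula claimed. Termination follows because $S$ is finite and at least one knot is removed on each pass (the one achieving the current minimum), using that each $K\in S$ has some $K'\in\bjset_K$ with $u(K')<u(K)$ hence (by induction on crossing number, or by a simultaneous induction) some $K'\in\bjset_K$ whose $u_{BJ}^w$ value is already assigned and small.

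For part (2), the inequality $u(K)\le u_{BJ}^s(K)$ is essentially immediate: since unknotting number changes by at most $1$ under a single crossing change, for any $K'\in\bjset_K$ we have $u(K)\le u(K')+1$, so $u(K)\le 1+\min\{u(K')\mid K'\in\bjset_K\}=u_{BJ}^s(K)$. For $u_{BJ}^s(K)\le u_{BJ}^w(K)$, I would induct on crossing number (or on the pass number $\ell$ from part (1)): by the inductive hypothesis $u(K')\le u_{BJ}^w(K')$ for every $K'\in\bjset_K$ of smaller crossing number, and combining with the formula from part (1),
\begin{equation*}
u_{BJ}^s(K)=1+\min\{u(K')\mid K'\in\bjset_K\}\le 1+\min\{u_{BJ}^w(K')\mid K'\in\bjset_K\}=u_{BJ}^w(K),
\end{equation*}
handling the within-crossing-number edges by the simultaneous induction on $\ell$.

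The main obstacle is the well-definedness in part (1): one must rule out the pathology where a nonempty subset of $S_n$ survives forever because every knot in it only "points to" other knots in it (a directed cycle with no exit of value already assigned). The resolution is the fact — which I would isolate as the crucial input — that for every nontrivial knot $K$, the set $\bjset_K$ contains a knot $K'$ with $u(K')=u(K)-1<u(K)$; iterating, one eventually reaches the unknot, and this gives a finite "certificate" bounding how long $K$ can survive, forcing termination. Making the bookkeeping between the global induction on crossing number and the inner induction on $\ell$ fully rigorous is the only delicate point; everything else is formal.
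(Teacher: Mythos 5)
Your part (2) and the derivation of the recursion formula at the end of part (1) are fine and essentially match the paper's argument, but your proof of well-definedness has a fatal circularity. Twice you invoke, as ``the crucial input,'' the statement that for every nontrivial knot $K$ the set $\bjset_K$ contains a knot $K'$ with $u(K')=u(K)-1<u(K)$. That statement \emph{is} the Bernhard--Jablan Conjecture (Equation~(1) of the paper), which is exactly what the paper disproves; it cannot be used to prove this lemma, and the whole point of introducing $u_{BJ}^w$ is to have a quantity whose definition does not presuppose the conjecture. Your other proposed escape route --- that every vertex of the digraph on $S_n$ has an edge directly to a knot of crossing number $<n$ --- is also unjustified: a single crossing change in a minimal $n$-crossing diagram yields an $n$-crossing diagram, which may represent another $n$-crossing knot, and nothing rules out that \emph{all} single changes do so.

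The paper's termination argument avoids both problems using only the elementary fact quoted in the introduction: some set of $k\le n/2$ crossing changes in a minimal diagram $D$ of $K$ produces the unknot. Taking the least $k$ for which changing some $k$ crossings $c_1,\ldots,c_k$ of $D$ yields a knot of crossing number $<n$, one obtains a chain $K=K_0,K_1,\ldots,K_k$ with $K_{i+1}\in\bjset_{K_i}$, where $K_0,\ldots,K_{k-1}$ all have crossing number exactly $n$ (so each intermediate diagram is a minimal diagram, which is what makes the $\bjset$ memberships legitimate) and $K_k$ has crossing number $<n$, hence an already-assigned value $u_{BJ}^w(K_k)$ sitting in $T_{K_{k-1}}$ from the start. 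One then propagates up the chain: $K_{k-1}$ is assigned a value within $1+u_{BJ}^w(K_k)$ passes, then $K_{k-2}$, and so on until $K_0=K$ is assigned a value. This is the finite ``certificate'' you were looking for, and it requires no information about unknotting numbers at all. With part (1) repaired in this way, your part (2) goes through essentially as you and the paper both write it.
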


\begin{proof}
In order to show that $u_{BJ}^w(K)$ is well-defined, we must show that
in the iterative procedure passing through the set $S$ and the
sets $T_K$ (for $K \in S$),  eventually $S = \emptyset$ and
$u_{BJ}^w(K)$ is defined for every knot with crossing number $n$.
Suppose that $K \in S$, let $D$ be any
minimal crossing number diagram for $K$, and let $T_{D,k}$ be the
set of knots represented by diagrams
obtained by changing any $k$ crossings of the diagram $D$.
By the elementary fact noted above (bounding the number of crossing
changes needed in a minimal crossing diagram to produce the unknot),
for some $1 \le k \le n/2$, the
sets $T_{D,0},...,T_{D,k-1}$ do not contain a knot with crossing number $< n$,
but the set $T_{D,k}$ does contain a knot $K_k$
with crossing number $< n$.  Now $K_k$ is represented by
a diagram obtained from $D$ by changing $k$ crossings $c_1,...,c_k$.  
Moreover, the knots $K_i$ represented by the diagram obtained
from $D$ by changing the crossings $c_1,...,c_i$ satisfy
that $K_i$ has crossing number $n$ and
$K_{i+1} \in \bjset_{K_i}$ 
for all $i < k$, and $K=K_0$.  Now the set $T_{K_{k-1}}$ contains the number
$u_{BJ}^w(K_k)$, and so in the procedure above, after at most  
$1+u_{BJ}^w(K_k)$ passes
through $S$ and the $T_{\tilde K}$ sets, 
$u_{BJ}^w(K_{k-1})$ is assigned a value $\leq 1+u_{BJ}^w(K_k)$.
Then after at most 
$1+u_{BJ}^w(K_{k-1})$ passes,
$u_{BJ}^w(K_{k-2})$ is assigned a value $\leq 1+u_{BJ}^w(K_{k-1})$, etc.
Hence after finitely many passes, $u_{BJ}^w(K)=u_{BJ}^w(K_0)$
is assigned a value.

From the definition of $u_{BJ}^w(K)$, 
at the point in the process in which $u_{BJ}^w(K)=r$ is defined
each knot $K'$ in the set $\bjset_K$ is either in $T_K$ or
else has been replaced in $T_K$ by the number $u_{BJ}^w(K')$.  
Moreover, at that point $r-1$ is the least number appearing in $T_K$,
and all of the knots $K'$ in $T_K$ satisfy $u_{BJ}^w(K') \ge r$
as well, 
since they will be assigned values later in the process,
concluding the proof of part (1).

The inequality $u(K) \le u_{BJ}^s(K)$ is immediate from the definitions, 
since for every $K^\prime\in\bjset_K$, we have $|u(K^\prime)-u(K)|\leq 1$, so 
$u(K^\prime)\geq u(K)-1$.
An inductive argument shows that $u(K)\leq u_{BJ}^w(K)$,
since $u_{BJ}^w(K)=u_{BJ}^w(K^\prime)+1\geq u(K^\prime)+1\geq u(K)$ for 
some $K^\prime\in\bjset_K$ with $u_{BK}^w(K^\prime)<u_{BJ}^w(K)$,
and therefore (using part (1)) $ u_{BJ}^s(K)\leq  u_{BJ}^w(K)$ as well.
\end{proof}

\begin{corollary}\label{cor:bjequiv}
The Bernhard-Jablan Conjecture holds if and only if 
$u(K)=u_{BJ}^w(K)$
for all knots $K$.
\end{corollary}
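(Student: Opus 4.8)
The plan is to prove the biconditional by establishing the two implications separately, using Lemma~\ref{lem:bjnumbers} as the main tool. The forward direction is the one requiring an argument; the reverse direction is essentially immediate from part (2) of the lemma.

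For the reverse direction, suppose $u(K) = u_{BJ}^w(K)$ for all knots $K$. I would argue that by Lemma~\ref{lem:bjnumbers}(2) we have the sandwich $u(K) \le u_{BJ}^s(K) \le u_{BJ}^w(K)$, so under the hypothesis the two outer terms coincide and hence $u(K) = u_{BJ}^s(K)$ for every knot $K$; by the "Equivalently" clause of Conjecture~\ref{conj:bj} (Equation~(\ref{eqn:bj})), this is precisely the Bernhard-Jablan Conjecture.

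For the forward direction, suppose the Bernhard-Jablan Conjecture holds, i.e.\ $u(K) = u_{BJ}^s(K)$ for all knots $K$. I would prove $u(K) = u_{BJ}^w(K)$ for all $K$ by induction on the crossing number of $K$; the base case is the unknot, where all three quantities are $0$. For the inductive step, fix $K$ with crossing number $n$ and assume the claim for all knots of smaller crossing number. By Lemma~\ref{lem:bjnumbers}(1), there is some $K' \in \bjset_K$ with $u_{BJ}^w(K) = 1 + u_{BJ}^w(K')$, and $K'$ has crossing number strictly less than $n$ (this is where one uses that the weak BJ number of $K$ is one more than the minimum over $\bjset_K$, so the minimizing $K'$ must have been assigned its value earlier in the iterative process, hence has smaller crossing number). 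Then by the inductive hypothesis $u_{BJ}^w(K') = u(K')$, so $u_{BJ}^w(K) = 1 + u(K') \ge u_{BJ}^s(K) = u(K)$, using the definition of $u_{BJ}^s(K)$ as $1 + \min\{u(K'') \mid K'' \in \bjset_K\}$ for the inequality and the conjecture for the last equality. Combined with $u(K) \le u_{BJ}^w(K)$ from Lemma~\ref{lem:bjnumbers}(2), this gives $u(K) = u_{BJ}^w(K)$, completing the induction.

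The only subtle point—the "main obstacle," such as it is—is confirming that the knot $K'$ realizing the minimum in Lemma~\ref{lem:bjnumbers}(1) necessarily has crossing number less than $n$, so that the inductive hypothesis applies; this follows from examining the iterative construction of $u_{BJ}^w$, since $u_{BJ}^w(K)$ is assigned only after some element of $T_K$ has received a (strictly smaller) value, and within a single crossing number level $S_n$ the process can only decrease via knots that left $S$ earlier, which (by the well-definedness argument in the proof of Lemma~\ref{lem:bjnumbers}(1)) traces back to a knot of crossing number $< n$. Alternatively, one can sidestep this by noting that a chain $K = K_0, K_1, \dots$ in $\bjset$ with strictly decreasing $u_{BJ}^w$ values cannot stay at crossing number $n$ forever, so some step drops the crossing number, and the inductive hypothesis can be applied at that point after pushing the equality back up the chain.
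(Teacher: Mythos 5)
Your reverse direction is correct and is exactly the paper's argument: the sandwich $u(K)\le u_{BJ}^s(K)\le u_{BJ}^w(K)$ from Lemma~\ref{lem:bjnumbers}(2) collapses under the hypothesis. The forward direction, however, has a genuine gap --- in fact two related ones. First, your induction is on the crossing number of $K$, and you need the knot $K'\in\bjset_K$ realizing $u_{BJ}^w(K)=1+u_{BJ}^w(K')$ to have crossing number $<n$ so that the inductive hypothesis applies. That is false in general: a single crossing change in a minimal $n$-crossing diagram frequently yields another $n$-crossing knot (this is exactly why the iterative definition of $u_{BJ}^w$ needs repeated passes through $S_n$), and neither of your patches closes the hole. ``Traces back to a knot of smaller crossing number'' is a statement about the chain below $K'$, not about $K'$ itself; and the chain argument breaks when you try to ``push the equality back up,'' because at the step from $K_m$ to $K_{m-1}$ you would need $u(K_m)=u(K_{m-1})-1$, i.e., that the $u_{BJ}^w$-minimizer in $\bjset_{K_{m-1}}$ is also a $u$-minimizer, which is not known.

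Second, even granting that the inductive hypothesis applies to $K'$, your displayed chain only proves $u_{BJ}^w(K)=1+u(K')\ge u_{BJ}^s(K)=u(K)$, and you then ``combine'' this with $u(K)\le u_{BJ}^w(K)$ --- but those are the \emph{same} inequality, so no equality follows. The direction you actually need is $u_{BJ}^w(K)\le u(K)$, and to get it you must apply the inductive hypothesis to a knot $K^*\in\bjset_K$ realizing the minimum of $u$, so that $u_{BJ}^w(K)\le 1+u_{BJ}^w(K^*)=1+u(K^*)=u(K)$. The paper fixes both problems at once by inducting on $u(K)$ rather than on crossing number: under the Bernhard-Jablan hypothesis the $u$-minimizer $K^*$ satisfies $u(K^*)=u(K)-1<u(K)$, so the inductive hypothesis applies to it regardless of its crossing number, and the remaining knots $K''\in\bjset_K$ are handled by $u_{BJ}^w(K'')\ge u(K'')\ge u(K^*)=u_{BJ}^w(K^*)$, after which Lemma~\ref{lem:bjnumbers}(1) gives $u_{BJ}^w(K)=1+u_{BJ}^w(K^*)=u(K)$. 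If you replace your induction variable by the unknotting number and route the inequality through the $u$-minimizer, your proof becomes the paper's.
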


\begin{proof}
If $u(K)=u_{BJ}^w(K)$ (Equation~(\ref{eqn:weakbj}))
holds for all knots $K$, then Lemma~\ref{lem:bjnumbers}(2)
shows that Equation~(\ref{eqn:bj}) holds for all $K$ as well.

Conversely, suppose that the Bernhard-Jablan Conjecture holds.  We prove
Equation~(\ref{eqn:weakbj}) by induction on $u(K)$.
Note that $u(K)=0$ implies that $K$ is the unknot, and so $u_{BJ}^w(K)=0$ also.
Suppose that Equation~(\ref{eqn:weakbj}) holds for all knots with
unknotting number $< n$, and suppose that $u(K)=n$.
By Equation~(\ref{eqn:bj}) and the definition of $u_{BJ}^s$, 
$u(K)=1+\min\{u(K') \mid K' \in \bjset_K\}$,
and so by the inductive assumption
$u(K)=1+\min\{u_{BJ}^w(K') \mid K' \in \bjset_K\}$,
since the smallest $u(K^\prime)$ appearing is less than $u(K)=n$, and so equals
$u_{BJ}^w(K^\prime)$, and for all other $K^{\prime\prime}\in \bjset_K$ we have
$u_{BJ}^w(K^{\prime\prime})\geq u(K^{\prime\prime})\geq u(K^\prime)=u_{BJ}^w(K^\prime)$.
The conclusion follows from Lemma~\ref{lem:bjnumbers}(1).
\end{proof}


\subsection{Computational resources} \label{sub:software}

 
There are three resources that we used to assemble the data needed 
to establish the results in Theorem~\ref{thm:example}. The first of these 
is {\it SnapPy} \cite{snappy}, 
an indispensible program for studying 
knots and 3-manifolds,
based on the program SnapPea, and developed by 
Culler, Dunfield, Goerner and Weeks. In particular, SnapPy
was used to build knots as (random) braids, and to identify them
using SnapPy's {\it identify()} command. This utility uses a hash based on the knot's
geometric properties to compare to a list, to determine the likely identity of the knot, 
and then uses an underlying
canonical triangulation of the knot complement to find a combinatorial
isomorphism between ideal triangulations of a given knot complement and 
a reference knot complement, which provides a homeomorphism
of knot complements and so, by the solution to the Knot Complement Problem
by Gordon and Luecke \cite{golu}, identifies the knot. We also detected the
unknot using the {\it fundamental\_group()} command, since the unknot is the
only knot whose knot group has a presentation with no relators,
i.e., the knot group is free. Although it is not relevant to the specific
examples that we will discuss, SnapPy's {\it deconnect\_sum()} command was
also used to identify non-prime knots and their summands.

The second resource we used is Knotscape \cite{knotscape}, developed by Hoste and 
Thistlethwaite. We used this program to provide an independent 
verification of the database of minimal crossing diagrams for knots with
a fixed crossing number, discussed below; Knotscape works directly 
with Dowker-Thistlethwaite (DT) codes, and
applies combinatorial moves on these codes to match a knot projection with 
a unique normal form, thereby identifying the knot. It also, incidentally,
can be much faster than SnapPy at the task of identifying thousands or millions
of knots in a row, since it directly manipulates a knot's DT code. 

The third resource we used is {\it Knotinfo} \cite{knotinfo}, the online database of 
knot invariants maintained by Jae Choon Cha and Chuck Livingston at 
the University of Indiana.
This was used to populate lists of knots with known unknotting
number, as well as initial upper and lower bounds for knots with
unknown unknotting number, in order to make the needed comparisons
with `crossing-adjacent' knots.

Taken together, these tools gave us the foundation on which to build 
our search process, and assemble the data described below.


\subsection{Computation of $\bjset_K$} \label{sub:bjset}

 
In this subsection we describe our procedure for constructing
the set $\bjset_K$ for any $k$-crossing knot $K$.
Roughly, this is done
by exhaustively identifying every
knot associated to every $k$-crossing diagram up to flypes (Figure~\ref{fig:flype1}). 
We carried out these calculations using SnapPy to identify the knots,
and, as an independent check, carried out the same exhaustion
using Knotscape \cite{knotscape}, again using SnapPy to make the identifications. 

More precisely, for a fixed natural number $k$,
we start with a set of DT codes (as used in SnapPy) containing a representative 
for a minimal diagram of each alternating $k$-crossing
knot; we refer to the associated set of knot diagrams as the 
set $\mathcal{R}$ of {\it reference diagrams}
for this procedure.

Let $\mathcal{D}$ be the set of all DT codes for diagrams obtained by 
changing a subset of the crossings of a reference diagram
(omitting changing the lexicographically first crossing in
each DT code, since including it would only produce the mirror images
of the other knots built).  SnapPy can then be used to identify the knots
associated to the DT codes in $\mathcal{D}$;
let $\mathcal{D_K}$ be the subset of $\mathcal{D}$ consisting of the 
projections of $K$ obtained from the reference diagrams by crossing changes.
Finally, again use SnapPy to identify the set $\bjref_K$ 
of knots represented by diagrams obtained from elements of $\mathcal{D}_K$
by a single crossing change.  

In the following, we show that the collection $\bjset_K$ of knots we obtain from all $k$-crossing
projections of a $k$-crossing knot $K$ by changing a single crossing is 
the same as the collection $\bjref_K$ we obtain by using the projections of 
$K$ obtained from crossing changes made to reference diagrams alone;
hence the algorithm above computes the set $\bjset_K$,
and the diagrams obtained from the reference diagrams
by a single crossing change 
suffice for the computations needed in the strong and weak Bernhard-Jablan unknotting numbers.
This perspective is implicit in some of the work of Jablan and Sazdanovic~\cite{jablan2}
and of Zekovi\'c, Jablan, Kauffman, Sazdanovic and Sto\v{s}i\'c~\cite{zjkss}
on this conjecture.

\begin{proposition}\label{prop:refsuffices}
For any knot $K$, $\bjset_K = \bjref_K$.
\end{proposition}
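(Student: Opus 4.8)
The plan is to prove the two inclusions of the claimed equality separately; $\bjref_K \subseteq \bjset_K$ is immediate, and the reverse inclusion carries essentially all of the content. For the easy direction: every diagram in $\mathcal{D}_K$ is obtained from a $k$-crossing reference diagram by crossing changes, so it has exactly $k$ crossings, and it represents the $k$-crossing knot $K$; hence it is a minimal crossing diagram of $K$. Therefore a diagram obtained from an element of $\mathcal{D}_K$ by one further crossing change is also obtained from a minimal crossing diagram of $K$ by a single crossing change, and the knot it represents lies in $\bjset_K$.

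For $\bjset_K \subseteq \bjref_K$, suppose $L \in \bjset_K$ is realized by a minimal (hence $k$-crossing) diagram $D$ of $K$ and a crossing $c$ of $D$, and let $D''$ be the diagram obtained from $D$ by changing $c$, so $D''$ represents $L$. First I would note that $D$ is reduced: a nugatory crossing could be untwisted away, giving a diagram of $K$ with fewer than $k$ crossings and contradicting minimality. The shadow (underlying $4$-valent plane graph) of $D$ supports exactly two alternating over/under assignments, mirror images of one another; fix one and call the resulting diagram $D_{\mathrm{alt}}$. Being nugatory is a property of the shadow alone, so $D_{\mathrm{alt}}$ is a reduced alternating diagram; by the Kauffman--Murasugi--Thistlethwaite theorem it is a minimal diagram of the alternating knot $K_{\mathrm{alt}}$ it represents, whence $K_{\mathrm{alt}}$ has crossing number $k$. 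By the Tait flyping conjecture (Menasco--Thistlethwaite), $D_{\mathrm{alt}}$ is joined to the reference diagram $R \in \mathcal{R}$ for $K_{\mathrm{alt}}$ by a finite sequence of flypes.

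The decisive point is that a flype can be performed on any over/under assignment supported on a given shadow, it changes the over/under datum at no crossing, and it induces a bijection between the crossing sets before and after; hence performing a flype commutes with changing any prescribed set of crossings. Tracking this along the flype sequence from $R$ to $D_{\mathrm{alt}}$, and taking $S$ to be the set of crossings at which $D$ differs from $D_{\mathrm{alt}}$, I obtain a subset $S'$ of the crossings of $R$ so that the diagram $R_{S'}$ obtained from $R$ by changing exactly the crossings of $S'$ is flype-equivalent to $D$, hence represents $K$, and changing the single further crossing $c'$ of $R_{S'}$ corresponding to $c$ yields a diagram flype-equivalent to $D''$, hence representing $L$. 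This realizes $R_{S'}$ as a minimal diagram of $K$ arising from a reference diagram by crossing changes, with one more crossing change producing $L$. The remaining point is the convention in the definition of $\mathcal{D}$ of not changing the lexicographically first crossing of a DT code; this is dealt with by observing that replacing $S'$ by its complement toggles exactly that crossing and mirrors the diagram, so the convention removes from each reference family only mirror images of diagrams it retains. This yields $L \in \bjref_K$.

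The step I expect to be the main obstacle is making the flype bookkeeping of the previous paragraph precise --- checking that a flype is insensitive to the over/under assignment and commutes with crossing changes, so that an arbitrary minimal diagram of $K$ can be carried to a reference diagram while keeping track of which single crossing change produces $L$. The two facts borrowed from the literature --- that reduced alternating diagrams minimize crossing number, and that the reduced alternating diagrams of a fixed knot form a single flype equivalence class --- are used as black boxes.
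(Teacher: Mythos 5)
Your proof follows the same route as the paper's: the easy inclusion first, then for the reverse inclusion the reduction of an arbitrary minimal diagram of $K$ to the reduced alternating diagram on its shadow, the Kauffman--Murasugi--Thistlethwaite minimality theorem together with the Menasco--Thistlethwaite flyping theorem to connect that alternating diagram to the reference diagram by flypes, and finally commuting crossing changes past the flype sequence (your remark about the omitted lexicographically-first crossing matches the paper's mirror-image convention). The commutation step you correctly flag as the main obstacle is precisely the paper's Lemma~\ref{lemma:flype}, which it establishes by a three-case check according to whether the changed crossing lies outside the flyped tangle, inside it, or at the flype site itself; your formulation via the flype's bijection on crossings is an accurate packaging of that same fact.
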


\begin{proof}
It is immediate that $\bjref_K \subseteq \bjset_K$.

Let $K_1$ be any element of $\bjset_K$.  There is a
$k$-crossing projection $D$ of the $k$-crossing knot $K$, and a diagram 
$D_1$
obtained from $D$ by changing a single crossing, such that $D_1$ is a projection of $K_1$.
Note that $D$ is reduced (i.e., $D$ has no nugatory crossing)
since such a $k$-crossing diagram with a nugatory crossing
cannot represent a $k$-crossing knot.
Build the  associated knot shadow (a $k$-vertex
4-valent graph) by replacing each crossing with a vertex
of valence 4, and from this shadow, reintroduce crossings
to obtain a reduced diagram $D^\prime$ for an alternating knot.
The diagram $D^\prime$ 
must have crossing number $k$~\cite{kauf87},\cite{mura87},\cite{thistl87}, 
and by the proof of the
Tait Flyping Conjecture \cite{flype} the diagram $D^\prime$ is related to the
reference diagram $R$ of the same alternating knot by a sequence of flypes
(Figure~\ref{fig:flype1}). 

\begin{figure}[H]
\begin{center}
\includegraphics[width=3.5in]{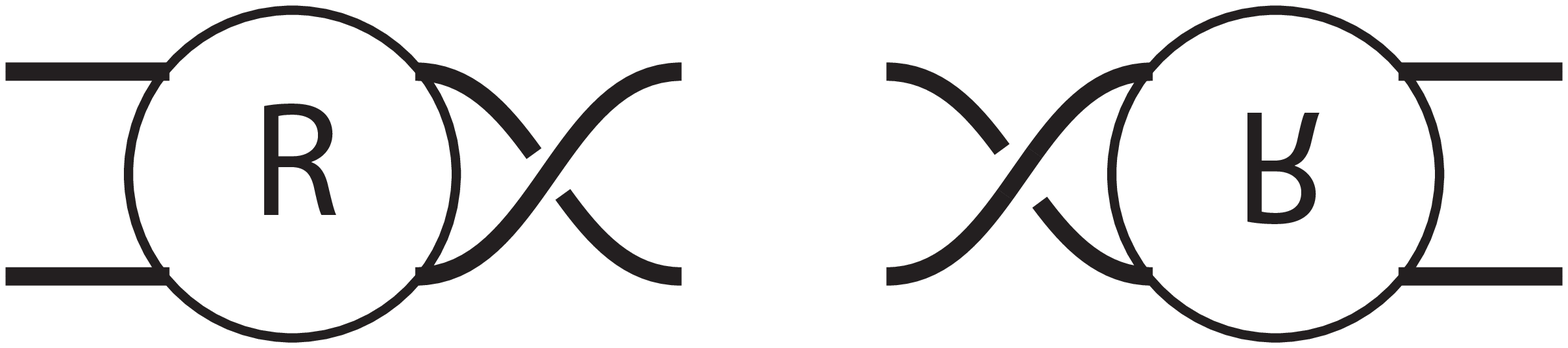}
\caption{Flype}\label{fig:flype1}
\end{center}
\end{figure} 

By applying Lemma~\ref{lemma:flype} below (with $D$, $\widetilde D$, and $D_1$ in that Lemma
playing the roles of $D^\prime$, $R$, and $D$, respectively) a finite number of times, 
there is a diagram $\widetilde D$
obtained by changing crossings in $R$ such that $D$ and $\widetilde D$ are related
by a sequence of flypes.  Then $\widetilde D$ is represented by a
DT code in the set $\mathcal{D}$.  Moreover, since the flype move does
not change the knot represented by the diagram, the knots represented by
$D$ and $\widetilde D$ are the same, and so $\widetilde D \in \mathcal{D}_K$ is another
minimal diagram for the knot $K$.

Applying Lemma~\ref{lemma:flype} another time,
there is a diagram $\widetilde D_1$ obtained by changing a single
crossing in $\widetilde D_1$ 
such that $D_1$ and $\widetilde D_1$ are related by a sequence of flypes.
Then $\widetilde D_1$ is another projection of the knot $K_1$,
and since $\widetilde D_1$ is obtained by a single crossing change from
a diagram represented in $\mathcal{D}_K$, then $K_1$ is in $\bjref_K$.
\end{proof}

\begin{lemma} \label{lemma:flype}
If knot diagrams $D$ and $\widetilde D$ are related by a sequence of flypes, then 
for any diagram $D_1$ obtained by changing a crossing in $D$ there is a 
diagram $\widetilde D_1$ obtained by changing a crossing in $\widetilde D_1$ 
such that $D_1$ and $\widetilde D_1$ are related by a sequence of flypes.
\end{lemma}

\begin{proof} This can be established
by induction on the number of flypes used, by showing that a crossing change followed by a flype
yields the same diagram as the same flype followed by some crossing change;
therefore several flypes followed by a crossing change is the same as some
crossing change followed 
by several flypes.
There are three cases to consider. (1) If the crossing change is outside of the
tangle being flyped, then the same crossing change applied after the
flype will yield the diagram $\widetilde D_1$ related to $D_1$ by the same flype. (2) If the crossing change is inside of the
tangle being flyped, then the crossing is carried to a crossing in the new
diagram by the flype, and changing that corresponding crossing after the
flyped yields the same diagram as crossing change followed by flype. 
(3) If the crossing
change is the one which is removed by the flype, 
then applying the crossing change followed by the flype
yields the same diagram obtained by first applying
the opposite flype and 
then changing the crossing created by the flype (illustrated in Figure~\ref{fig:flype2}).  
\begin{figure}[H]
\begin{center}
\includegraphics[width=3.5in]{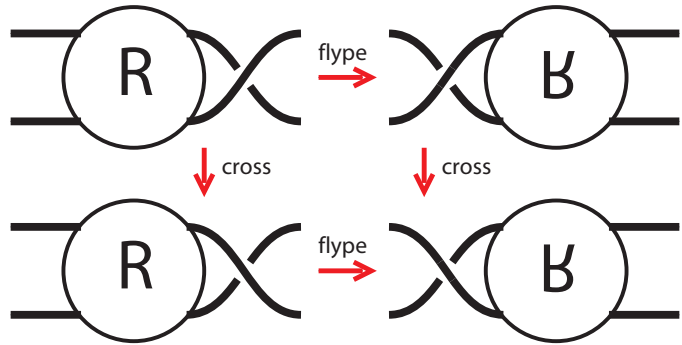}
\caption{Flype and crossing change}\label{fig:flype2}
\end{center}
\end{figure}
\end{proof}

Thus we lose no information about the knots that we need to consider in 
computing the weak Bernhard-Jablan unknotting number if we only work with 
the diagrams $\widetilde D$ for $K$ obtained by changing crossings in our reference diagrams.


\subsection{Proof of Theorem~\ref{thm:example}} \label{sub:example}


The remainder of the proof of Theorem~\ref{thm:fail} in this
section is the proof of parts (a)-(c) of Theorem~\ref{thm:example}. 
We begin with part (a) of that theorem.

\begin{lemma}
The knot $K13n3370$ has unknotting number less than or equal to 2.
\end{lemma}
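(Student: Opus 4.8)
The plan is to prove this by an explicit construction: exhibit a specific diagram of $K13n3370$ on which two crossing changes (interspersed with isotopy) yield the unknot. First I would present a concrete diagram for $K13n3370$ --- in practice given by a braid word or a DT code --- that was located by the random search described in the introduction, and identify a specific crossing in that diagram. The key claim is that changing that one crossing produces a diagram for the knot $K11n21$. Since $u(K11n21)=1$ (a fact one can cite from \cite{knotinfo}, or re-verify directly), a second crossing change unknots $K11n21$, and hence the original two crossing changes unknot $K13n3370$, giving $u(K13n3370)\le 2$.

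Concretely, the steps in order are: (1) write down the chosen diagram $D$ of $K13n3370$, e.g. as a braid closure, together with the chosen crossing $c$; (2) perform the crossing change at $c$ and simplify the resulting diagram (via Reidemeister moves / isotopy) to a recognizable form; (3) verify that this resulting knot is $K11n21$ --- this is the identification step, which in the paper is done with SnapPy's \emph{identify()} command, using the canonical-triangulation machinery and the Gordon--Luecke solution to the Knot Complement Problem \cite{golu} to make the identification rigorous; (4) invoke $u(K11n21)=1$; (5) conclude. One should also double-check that $K13n3370$ is not itself the unknot (so that the bound is meaningful), which follows from, e.g., its nontrivial knot group via the \emph{fundamental\_group()} command, or from any standard invariant.

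The main obstacle is step (3): certifying that the diagram obtained after the single crossing change really is $K11n21$ and not some other 11-crossing knot. Knot identification from a diagram is where the computational input is essential; a careless match could pick out a knot with a different (possibly larger) unknotting number, which would break the argument. This is handled by running the identification in two independent pieces of software --- SnapPy and Knotscape \cite{knotscape} --- and by the underlying theoretical guarantee that a combinatorial isomorphism of canonical ideal triangulations yields a genuine homeomorphism of complements. A secondary, purely bookkeeping point is to make sure the sequence "change $c$, then isotope, then change a second crossing, then isotope to the unknot" is recorded explicitly enough to be reproduced; the authors' posted code supplies this.

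In short, the proof is: produce the diagram, change the indicated crossing, recognize the result as $K11n21$, use $u(K11n21)=1$, and add one. The only non-routine ingredient is the rigorous knot recognition, which rests on the cited identification algorithms and the Knot Complement Theorem.
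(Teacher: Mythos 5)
Your proposal matches the paper's proof essentially exactly: the paper exhibits a 20-crossing braid closure representing $K13n3370$, changes one crossing in the resulting DT code, uses SnapPy to identify the result as $K11n21$, and invokes $u(K11n21)=1$ to conclude $u(K13n3370)\le 2$. The only detail you leave implicit is the specific braid word and crossing, which the paper supplies explicitly.
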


\begin{proof} The knot $K13n3370$ is the closure of the 7-braid \hfill
$$\{1,1,-3,4,-3,-5,5,-6,-6,4,-5,2,4,-6,3,4,-1,3,5,2\},$$
with 20 crossings, using the notational convention of Knotinfo, 
shown in Figure~\ref{fig:braid}.  

\begin{figure}[H]
\begin{center}
\includegraphics[width=4.5in]{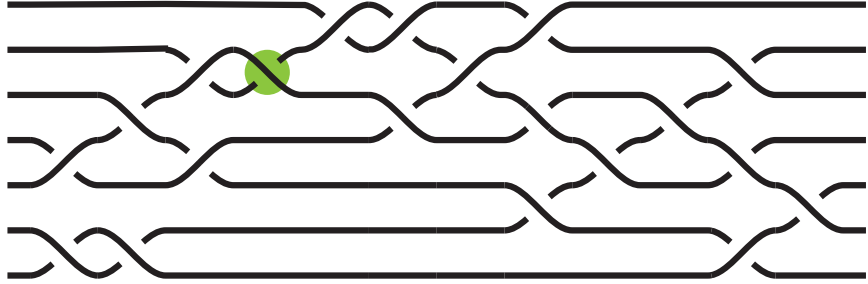}
\caption{$K13n3370$}\label{fig:braid}
\end{center}
\end{figure} 

SnapPy can convert this braid representation
to a Dowker-Thistlethwaite (DT) code for $K13n3370$, 
using the command {\it DT\_code()}, which SnapPy handles more natively 
than braid notation,
giving us the DT code 
$$
[-14,-10,-22,20,34,-36,40,-24,-8,6,-18,-32,2,38,26,-12,-16,4,-30,28].
$$
SnapPy verifies that this code
represents the knot $K13n3370$.
Changing the seventh crossing in the code gives us the 
DT code
$$
[-14,-10,-22,20,34,-36,-40,-24,-8,6,-18,-32,2,38,26,-12,-16,4,-30,28]
$$
which SnapPy identifies as the knot $K11n21$. From the 
Knotinfo database, we find that $u(K11n21)=1$; in fact, it is the case that all 23 
minimal crossing diagrams for the knot $K11n21$ have at least 
one crossing which yields the unknot when changed. DT codes for 
these 23 diagrams can be found in an appendix at the end of this paper.

This shows that a single crossing change to the given diagram 
for $K13n3370$ yields a knot with unknotting number $1$; therefore,
$K13n3370$ has unknotting number at most $2$.
\end{proof}

Next, we show the other result in part~(a) of Theorem~\ref{thm:example},
that the weak Bernhard-Jablan unknotting number 
$u_{BJ}^w(K13n3370)$ is equal to 3, as well as parts~(b-c) of that Theorem.
In overview, we use the algorithm of Section~\ref{sub:bjset} to 
find the sets $\mathcal{D}_K$
and $\bjref_K$ associated to the knot $K13n3370$, 
and iteratively find the sets $\mathcal{D}_{K'}$ and $\bjref_{K'}$
for all $K' \in \bjref_K$, etc.  We then use the definition of $u_{BJ}^w$
(or iteratively
apply Lemma~\ref{lem:bjnumbers}) and 	
Proposition~\ref{prop:refsuffices}, as well as other information on
unknotting numbers for knots in the set $\bjset_{K13n3370}$,
to determine the
weak Bernhard-Jablan unknotting number of the knot $K13n3370$.

In more detail, working directly with SnapPy (or equivalently, with Knotscape)
to build these sets, we find that there are 
$4878$ 13-crossing alternating knots, which yield $4878$ reference diagrams, 
and changing every subset of 12 of the crossings in each diagram
yields $2^{12}$ diagrams for each, for a total of $19,980,288$ diagrams for SnapPy to 
identify. Distributing the work over several machines made this a manageable 
task. The resulting data can be found at the website described on page~\pageref{url} of this paper. 
Among these diagrams there are $64,399$ diagrams which represent
13-crossing knots (also to be found at the website; for comparison, 
$6,122,841$ of the diagrams represent the unknot), and 24 minimal diagrams for the knot
$K13n3370$; their set $\mathcal{D}_{K13n3370}$ of DT codes 
are listed in the appendix.

Changing each single crossing in each of these 24 diagrams yields 312 diagrams, 
but only 13 distinct knots, according to 
SnapPy: they are the knots 
in the set 
\begin{align*}
\bjset_{K13n3370}=\{7_4, & 8_8, 10_{34}, K11a211, K11n91, K11n132, 
K12a1118, K12n288, \\
& K12n333, K12n469, K12n491, K12n501, K12n512\}.
\end{align*}

For at least 10 of these 13 knots, the unknotting number, and hence 
(by Lemma~\ref{lem:bjnumbers}(2)) also the weak Bernhard-Jablan unknotting number,
is at least 2.

\begin{lemma} \label{lemma:cases}
The knots $7_4$, $8_8$, $10_{34}$, $K11a211$, $K11n91$, $K11n132$, 
$K12a1118$, $K12n333$, $K12n469$, and $K12n512$ all have unknotting number at least 2.
\end{lemma}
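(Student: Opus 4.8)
The plan is to establish each of the ten claimed lower bounds by exhibiting, for each knot in the list, a classical invariant or combination of invariants that obstructs unknotting number one. The main tool will be the Montesinos--Nakanishi trick together with the signature and the Alexander/Jones polynomials: recall that a knot with $u(K)=1$ arises from the unknot by a single crossing change, so its double branched cover is obtained by half-integral surgery on a knot in $S^3$, and in particular the linking form and the order of $H_1$ of the double cover are tightly constrained (this is the Lickorish obstruction). For several of the alternating knots in the list ($7_4$, $8_8$, $10_{34}$, $K11a211$, $K12a1118$) the determinant $\det(K)=|H_1(\Sigma_2(K))|$ is not of the form needed, or the linking form is not cyclic of the required type, which immediately rules out $u(K)=1$. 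First I would tabulate $\det(K)$, the signature $\sigma(K)$, and the relevant $\mathbb{Z}/2$-homology data for all ten knots (all available from KnotInfo \cite{knotinfo}), and check the Lickorish/Montesinos condition case by case.

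For the knots where the determinant obstruction alone does not suffice, I would bring in finer obstructions: the Ozsv\'ath--Szab\'o $\tau$ invariant and the knot signature give $|\sigma(K)| \le 2$ and $|\tau(K)|\le 1$ as necessary conditions for $u(K)\le 1$ (indeed $u(K)=1$ forces a genus-one cobordism to the unknot, so $|\tau|\le 1$ and $|s|\le 2$ for Rasmussen's $s$), and for the non-alternating 11- and 12-crossing knots $K11n91$, $K11n132$, $K12n333$, $K12n469$, $K12n512$ I expect the signature, the Heegaard Floer correction terms of the double cover, or the Ozsv\'ath--Szab\'o unknotting obstruction from \cite{ow08},\cite{ozsz} to do the job; again all of these are recorded in KnotInfo or computable directly. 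So the key steps, in order, are: (1) assemble the invariant data for the ten knots; (2) dispatch as many as possible via the Lickorish--Montesinos determinant/linking-form obstruction; (3) for the survivors, apply the signature bound $|\sigma|\le 2$ and the $\tau$/$s$ bound; (4) for any remaining knot, invoke the finer Heegaard--Floer-theoretic unknotting obstructions.

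The main obstacle I anticipate is not any single hard argument but rather the bookkeeping: one must be sure that for \emph{each} of the ten knots \emph{some} obstruction applies, and that the invariant values quoted are correct and consistently oriented (signature sign conventions, mirror-image ambiguities in the KnotInfo names, and the fact that some of these knots are amphichiral or have subtle symmetries). A secondary subtlety is that a few of these knots may have $u(K)=2$ with all the ``cheap'' invariants ($\det$, $\sigma$, $\tau$) consistent with $u=1$, in which case one genuinely needs a Floer-theoretic or Donaldson-diagonalization-type obstruction, and citing it precisely enough to be convincing will take some care. I would therefore present the proof as a short lemma-by-lemma or knot-by-knot table, stating for each knot the specific invariant that forces $u\ge 2$ and the reference from which its value is taken, rather than a single uniform argument.
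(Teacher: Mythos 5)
Your overall strategy --- dispatch each knot with whatever classical or Floer-theoretic obstruction to $u=1$ applies, presented knot by knot --- is essentially what the paper does for nine of the ten knots: Lickorish's linking-form argument handles $7_4$, Kanenobu--Murakami's classification of two-bridge knots with unknotting number one handles $8_8$ and $10_{34}$, the smooth $4$-ball genus computation of Lewark--McCoy gives $u(K11a211)\ge g_4(K11a211)=2$, and the remaining five non-trivial cases $K11n91$, $K11n132$, $K12a1118$, $K12n333$, $K12n469$ are settled by the \emph{algebraic unknotting number} $u_a$ of Borodzik--Friedl, which is built from the Blanchfield pairing and is strictly finer than the determinant/linking-form and signature data you list; you should not expect $\det$, $\sigma$, $\tau$, or $s$ alone to cover all five of those, so you would want to add $u_a$ to your toolkit explicitly.

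The genuine gap is $K12n512$. You flag the possibility that some knot in the list has all the cheap invariants consistent with $u=1$ and propose to fall back on ``finer Heegaard Floer obstructions,'' but for this knot that fallback does not appear to succeed --- the authors evidently could not close the case that way, and instead use a completely different mechanism: they exhibit an explicit diagram of $K12n512$ (a $21$-crossing braid closure) admitting a single crossing change to $9_{38}$, invoke Owens' result that $u(9_{38})=3$, and conclude $u(K12n512)\ge 3-1=2$ from the fact that a crossing change alters the unknotting number by at most one. This ``crossing adjacency to a knot of known larger unknotting number'' argument is the one step of the lemma that is not a table lookup, and your proposal as written has no route to it; without it, one of the ten knots remains unresolved.
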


\begin{proof} Lickorish \cite{lick82} showed that $u(7_4)=2$, 
Kanenobu and Murakami \cite{kamu86} showed that $u(8_8)=u(10_{34})=2$, and
Lewark and McCoy \cite{lemc15}, using the smooth 4-ball genus $g_4$, 
showed that $u(K11a211)\geq g_4(K11a211)=2$. Borodzik and Friedl \cite{bofr15}, using the
algebraic unknotting number $u_a$, showed that $u(K)\geq u_a(K)=2$ for 
$K=K11n91,K11n132,K12a1118,K12n333$, and $K12n469$. The authors showed (see \cite{knotinfo}), 
using the same random search procedure used to discover the projection
of $K13n3370$ used in this paper, that the knot $K12n512$ has a projection with a
crossing change yielding the knot $9_{38}$, and $u(9_{38})=3$ was shown by Owens \cite{ow08},
following earlier work of Stoimenow \cite{sto04}.
The relevant braids to see this knot adjacency are 
$$\{2,3,-4,-4,1,-2,3,-3,5,-3,-1,4,2,1,-5,-4,3,3,1,1,2\}$$
 for $K12n512$ and
$$\{2,3,-4,4,1,-2,3,-3,5,-3,-1,4,2,1,-5,-4,3,3,1,1,2\}$$
for $9_{38}$.
Therefore, all of these knots have unknotting number at least 2.
\end{proof}

This leaves $K12n288, K12n491$, and $K12n501$, whose unknotting numbers 
at the time of this writing are still unknown. But the same exhaustive 
procedure from Section~\ref{sub:bjset} 
applies to find the set $\mathcal{D}_K$ 
of minimal crossing diagrams for each 
$K \in \{K12n288, K12n491, K12n501\}$, as well as, 
incidentally, the minimal diagrams  (up to flypes) of all 12-crossing knots. 
The data set for all 12-crossing knots is available on the website 
discussed below (on page~\pageref{url}).
These three knots have, respectively, 24, 9, and 18 such diagrams,
which are given in the appendix.
SnapPy verifies that no single crossing change in any of these 51 diagrams 
yields the unknot, and so (using Proposition~\ref{prop:refsuffices}) 
the sets $\bjset_K$ for 
$K \in \{K12n288, K12n491, K12n501\}$ do not contain the unknot.
Hence these three 12-crossing knots all have
both strong and 
weak Bernhard-Jablan unknotting
number at least 2.  
Lemma~\ref{lem:bjnumbers}(1) now shows
that $u_{BJ}^w(K13n3370) \ge 3$.

To show part (b) of Theorem~\ref{thm:example}, it now
suffices to show the further claim that $u_{BJ}^w(K) \le 2$ for all
$K \in \{K12n288, K12n491, K12n501  \}$; we give an explicit
construction using SnapPy computations.
The knot $K12n288$ has a minimal crossing diagram with DT code
$$[4,\underline{10},12,-16,2,8,-18,-22,-6,-24,-14,-20];$$ 
when the second crossing is changed (changing the sign of the underlined 
entry in the DT code), SnapPy identifies the resulting knot as
the trefoil $3_1$.
The knot $K12n491$ has minimal diagram with DT code
$$[\underline{6},-12,20,18,24,16,-4,22,8,2,14,10],$$ 
and changing the first crossing yields the knot $6_3$.  Finally,
the knot   $K12n501$ has diagram with DT code
$$[6,-10,\underline{22},24,-16,18,20,-2,4,14,12,8],$$ 
and changing the third crossing results in the knot $8_{13}$.
It is a straightforward computation
to check that every $K' \in \{3_1,6_3,8_{13}\}$
has a minimal diagram admitting a single crossing change to 
the unknot, and hence satisfies $u_{BJ}^w(K')=1$.
(An alternative proof that each  $u_{BJ}^w(K')=1$ follows from
the fact that each $u(K')=1$ (see for example
KnotInfo~\cite{knotinfo}), and a result of McCoy~\cite{mc17}
that $u(K')=1$ implies $u_{BJ}^w(K')=1$ for all alternating knots.)
The claim now follows from Lemma~\ref{lem:bjnumbers}(1).

Applying Lemma~\ref{lem:bjnumbers}(1) again, to the knot
$K13n3370$, shows the last part of Theorem~\ref{thm:example}(a),
that $u_{BC}^w(K13n3370)=3$, as well as part (c) of Theorem~\ref{thm:example}.

As a consequence of these computations,
either one of the knots $K12n288, K12n491$, and $K12n501$
has unknotting number 1, and so fails to satisfy Equation~(\ref{eqn:bj})
in the Bernhard-Jablan Conjecture, or else all of the knots obtained from 
a minimal diagram of
$K13n3370$ by changing a crossing have unknotting number at least 2, and
so $K13n3370$ has strong Bernhard-Jablan unknotting number 3
and Equation~(\ref{eqn:bj}) fails for $K13n3370$.
This completes the proof of Theorems~\ref{thm:example} and~\ref{thm:fail},
of the existence of a counterexample to the Bernhard-Jablan Conjecture.


\section{Further examples}\label{sec:further}


In further computations we have found more counterexamples to
the Bernhard-Jablan Conjecture.  The following three pairs $(K,\tilde K)$
each satisfy that there is a diagram for $K$ with
a single crossing change resulting in a diagram for $\tilde K$, and
$u(\tilde K) +2 \le u_{BJ}^w(K)$.  Since $K$ and $\tilde K$ are crossing
adjacent, then
$u(K) \le u(K)+1$, and hence $u(K) < u_{BJ}^w(K)$, giving
a failure of Equation~(\ref{eqn:weakbj}) for the knot $K$.

Each example indicates a further counterexample to Equation~(\ref{eqn:bj})
in the Berhhard-Jablan Conjecture
either by the knot $K$, or by a knot in one of the the sets
$\bjset_{K}$, $\bjset^2_{K}=\cup_{ K' \in \bjset_{K}} \bjset_{ K'}$,
or $\bjset^i_{K}=\cup_{K' \in \bjset^{i-1}_{K}} \bjset_{ K'}$ for some $i \ge 3$.

The first pair is 
$K=K13n1669$, which has weak Bernhard-Jablan unknotting number $4$, and
$\tilde K=K14n23648$, with unknotting number $\leq$ $2$.
Adjacent braid representatives for these two knots are given by:
\begin{align*}
K = K13n1669: & \{-2, -4, -1, -3, -4, -4, -2, -3, 7, -1, -2, 4, -1, 5, 3, -1, -1, -2, 1, 6, 1, 4, 3\}\\
\tilde K = K14n23648: & \{-2, -4, 1, -3, -4, -4, -2, -3, 7, -1, -2, 4, -1, 5, 3, -1, -1, -2, 1, 6, 1, 4, 3\}
\end{align*}

The second example is
$K = K13n1587$, which has weak Bernhard-Jablan unknotting number $3$, and
$\tilde K = 10_{113}$, with unknotting number $1$.  Their adjacent
braid representations are:
\begin{align*}
K13n1587: &\{-4,-3,-3,-6,4,5,-4,3,-6,3,-2,1,3,2,5,4,1,1,6,-6,-5,3\}\\
10_{113}: &\{-4,-3,-3,-6,4,5,-4,3,-6,3,-2,1,3,2,5,4,1,1,6,6,-5,3\}
\end{align*}

Finally, the third pair gives the first example of an alternating
knot for which Equation~(\ref{eqn:weakbj}) fails.  The knots are
$K = K14a2539$, which has weak Bernhard-Jablan unknotting number $4$, and
$\tilde K = K14n1045$, with unknotting number $\leq$ $2$.
Adjacent braid representations are given by:
\begin{align*}
K14a2539:& \{7,3,-4,1,6,7,5,6,-1,5,-2,-7,-3, 4,-1,2,7,-6,3,7,\\
& 6,-2,-6,-1,-2,-7,3,-4,1,7,-3,7,-6,-7,5,1,-7,-1,6\}\\
K14n1045: &\{7,3,-4,1,6,7,5,6,-1,5,-2,-7,-3,-4,-1,2,7,-6,3,7,\\
& 6,-2,-6,-1,-2,-7,3,-4,1,7,-3,7,-6,-7,5,1,-7,-1,6\}
\end{align*}


\section{The future}\label{sec:fut}


With the failure of the Bernhard-Jablan Conjecture, there would seem to be no natural
remaining candidate for an algorithmic approach to computing unknotting number.
That is, there is no other `canonical' finite collection of diagrams for a knot in 
which to posit that an unknotting number minimizer `should' be found.
This could be viewed as a blow to the theory, but we choose to take a different
view. For example, despite a massive amount of effort, none of the computable invariants
that can be tied to unknotting number (and there are many) have succeeded
in showing that any of the nine 10-crossing knots with 
unknown unknotting number, namely $10_{11},10_{47},10_{51},10_{54},10_{61},
10_{76},10_{77},10_{79}$ and $10_{100}$,
have unknotting number 3, although all nine can be shown to
have weak Bernhard-Jablan unknotting number equal to 3. With the examples 
we have found, it may be more reasonable now to suppose that this
failure to establish that $u(K)=3$ for $K$ in this list is in fact because it
should not be possible; perhaps some or all of these knots have
unknotting number 2, and we just have not yet found the projection(s)
to demonstrate this. We are heartened by the fact that the example 
we found to settle Conjecture~\ref{conj:bj} 
was provided by a 20-crossing projection (although we
should note that the braid that our search program actually found 
had 37 crossings - it was possible, by hand, to reduce it to the 
braid word we have provided here), and so it may even be the case
that a relatively small projection could provide the knot adjacency
needed to show that one or more of these 
10-crossing knots have unknotting number 2.

There are almost certainly many more knots out there that
provide a counterexample to the Bernhard-Jablan conjecture. All of the code that
we have developed as part of this project to uncover unknotting numbers
by random search is available at the website \label{url}

\begin{center}
http://www.math.unl.edu/$\sim$mbrittenham2/unknottingsearch/
\end{center}

\noindent and we invite anyone who is interested in searching for interesting
knot adjacencies to download and run this code. The code to implement
random searches in SnapPy is written in python, as is the code for SnapPy
to build the databases of minimal crossing projections. SnapPy
can natively identify any knot with 14 or fewer crossings, and so 
this code is implemented to work up through that crossing limit.
The code to work with Knotscape provided at the website, which we used to check the
database computations, is written in Perl, and makes calls to the
Knotscape program `knotfind' to make its identifications. 

Since this code was originally designed to find useful knot adjacencies, which
would then identify tighter bounds on unknown unknotting numbers, and there
has been no systematic work, other than this project, to compute the unknotting numbers
of 13- and 14-crossing knots generally (that we are aware of), further
runs of this code are likely to uncover new values of the unknotting numbers of
these (as well as possibly smaller) knots. 
We would be grateful to hear of any such new data that the
reader might discover as a consequence of running this code for themselves;

\smallskip

The failure of Conjecture~\ref{conj:bj} in general leads naturally to several still unresolved questions.
Is Conjecture \ref{conj:bj} true for alternating knots? That is, does every minimal
\underbar{alternating} projection contain a crossing whose change will lower the
unknotting number of the underlying knot? McCoy \cite{mc17} has shown that this is 
true for unknotting number 1 alternating knots. (We note that
the third example in Section~\ref{sec:further} shows an alternating knot for which 
Equation~(\ref{eqn:weakbj}) fails to hold, but does not determine whether
Equation~(\ref{eqn:bj}) holds for that knot.)
In Theorem~\ref{thm:example} we have 
shown that one of four knots must provide a counterexample to 
Conjecture~\ref{conj:bj} (and in particular to Equation~(\ref{eqn:bj})), 
but the question remains, which one? 
What is the smallest (in terms of crossing number) counterexample
to the Bernhard-Jablan Conjecture? Are there infinitely many counterexamples? 

\medskip

\section{Appendix}

To make this paper as independent of external data sources as possible, 
we list, for reference, the DT codes of all of the minimal crossing
diagrams used to carry out the computations above. This will allow
the reader to verify for themselves that crossing changes on these diagrams have the
properties claimed in the paper. We do not provide the code here that was used to
produce these lists, or to show that they are exhaustive; that would make this
paper prohibitively long. This code is available at the website listed above.

\medskip

DT codes for the minimal diagrams, up to flype, of the knot $K11n21$:

\smallskip

{\smaller
\nidt $[4, 8, -12, 2, 16, -6, 20, 18, 10, 22, 14]$
\hfill
$[4, 8, 12, 2, -14, -18, 6, -20, -10, -22, -16]$

\nidt $[4, 8, 12, 2, -16, 18, 6, -20, 22, -14, -10]$
\hfill
$[4, 8, 12, 2, -20, 18, 6, -10, 22, -14, 16]$

\nidt $[4, 8, 12, 2, 14, -18, 6, -20, -10, -22, -16]$
\hfill
$[4, 8, 12, 2, 16, 18, 6, -20, 22, -14, 10]$

\nidt $[4, 8, 12, 2, 18, -16, 6, 20, -22, 14, -10]$
\hfill
$[4, 8, 12, 2, 18, -20, 6, -10, -22, 14, -16]$

\nidt $[4, 8, 12, 2, 18, 14, 6, -20, -22, 10, -16]$
\hfill
$[4, 8, 12, 2, 18, 20, 6, 10, -22, 14, -16]$

\nidt $[4, 10, -14, -20, 2, 16, -18, 8, -22, -6, -12]$
\hfill
$[4, 10, -14, -20, 2, 16, -18, 8, 22, -6, 12]$

\nidt $[4, 10, -14, -20, 2, 16, -22, 8, 12, -6, -18]$
\hfill
$[4, 10, -14, -20, 2, 16, -8, 22, 12, -6, -18]$

\nidt $[4, 10, -14, -20, 2, 22, -18, 8, -12, -6, 16]$
\hfill
$[4, 10, -14, 18, 2, 16, -6, 22, 20, 8, 12]$

\nidt $[4, 10, -16, -20, 2, 22, -18, -8, -12, -6, -14]$
\hfill
$[4, 10, 12, -14, 2, 22, -18, -20, -6, -8, -16]$

\nidt $[4, 10, 14, -20, 2, -16, -22, 8, -12, -6, -18]$
\hfill
$[4, 10, 14, -20, 2, -16, -8, 22, -12, -6, -18]$

\nidt $[4, 10, 16, -20, 2, 22, 18, -8, 12, -6, -14]$
\hfill
$[4, 12, -18, 14, 20, 2, 22, 8, 10, -6, 16]$

\nidt $[4, 14, 10, -20, 22, 18, 2, -8, 6, 12, -16]$
}

\medskip

DT codes for the minimal diagrams, up to flype, of $K13n3370$:

\smallskip

{\smaller
\nidt $[4,12,-22,-20,-18,-16,2,-24,-10,-8,-26,-14,-6]$

\nidt $[6,-10,12,14,-18,20,26,-24,-22,-4,2,-16,-8]$

\nidt $[6,-10,12,16,-18,20,26,24,-22,-4,2,-8,14]$

\nidt $[6,-10,12,22,-18,20,26,24,-8,-4,2,16,14]$

\nidt $[6,-10,12,24,-18,20,26,-8,-22,-4,2,-16,14]$

\nidt $[6,-10,12,26,-18,20,-8,-24,-22,-4,2,-16,-14]$

\nidt $[6,10,18,-16,4,24,22,20,-26,2,14,12,-8]$

\nidt $[6,-10,18,26,20,-2,24,22,8,4,16,14,12]$

\nidt $[6,-10,20,14,-18,-4,26,-24,-22,-12,2,-16,-8]$

\nidt $[6,-10,20,16,-18,-4,26,24,-22,-12,2,-8,14]$

\nidt $[6,-10,20,22,-18,-4,26,24,-8,-12,2,16,14]$

\nidt $[6,-10,20,24,-18,-4,26,-8,-22,-12,2,-16,14]$

\nidt $[6,-10,20,26,-18,-4,-8,-24,-22,-12,2,-16,-14]$

\nidt $[6,-10,22,-14,-2,-20,-8,26,24,-12,4,18,16]$

\nidt $[6,-12,22,-14,-2,20,-8,26,24,4,10,18,16]$

\nidt $[6,14,20,18,-24,-16,4,-22,-26,2,-12,-10,-8]$

\nidt $[6,14,20,26,18,-16,4,-22,-24,2,-12,-10,8]$

\nidt $[6,14,20,26,-24,-16,4,-22,-8,2,-12,-10,18]$

\nidt $[6,-14,22,20,18,26,-4,-24,10,8,2,-12,-16]$

\nidt $[6,-14,-22,26,20,18,-4,24,12,10,8,-2,16]$

\nidt $[6,20,12,14,-18,4,26,-24,-22,-10,2,-16,-8]$

\nidt $[6,20,12,16,-18,4,26,24,-22,-10,2,-8,14]$

\nidt $[6,20,12,22,-18,4,26,24,-8,-10,2,16,14]$

\nidt $[6,20,12,24,-18,4,26,-8,-22,-10,2,-16,14]$
}

\medskip

DT codes for the minimal diagrams, up to flype, of $K12n288$:

\smallskip

{\smaller
\nidt $[4,10,12,-16,2,8,-18,-22,-6,-24,-14,-20]$
\hfill
$[4,10,12,16,2,8,-18,-22,6,-24,-14,-20]$

\nidt $[4,10,12,-16,2,8,20,-6,24,22,14,18]$
\hfill
$[4,10,14,-12,2,20,-18,22,-6,8,24,16]$

\nidt $[4,10,14,-12,2,20,-18,22,6,-8,24,16]$
\hfill
$[4,10,-14,16,2,8,20,-24,22,12,18,-6]$

\nidt $[4,10,14,-16,2,8,20,-24,22,12,18,-6]$
\hfill
$[4,10,-14,18,2,8,20,-24,-22,12,-6,-16]$

\nidt $[4,10,14,-18,2,20,6,22,12,-8,24,16]$
\hfill
$[4,10,14,-20,2,8,-18,22,-6,-12,24,16]$

\nidt $[4,10,14,20,2,8,-18,22,6,-12,24,16]$
\hfill
$[4,10,14,-20,2,18,24,-22,8,12,-16,-6]$

\nidt $[4,10,14,-22,2,-18,8,-24,-20,-12,-16,-6]$
\hfill
$[4,10,-16,12,2,8,-20,-6,-24,-22,-14,-18]$

\nidt $[4,10,16,12,2,8,-20,6,-24,-22,-14,-18]$
\hfill
$[4,10,-16,14,2,8,-20,24,-22,-12,-18,-6]$

\nidt $[4,10,16,-14,2,8,-20,24,-22,-12,-18,-6]$
\hfill
$[4,10,16,-20,2,18,24,22,8,12,-6,14]$

\nidt $[4,10,-18,-12,2,16,-20,-8,24,-22,-14,-6]$
\hfill
$[4,10,18,-12,2,16,-20,8,24,-22,-14,-6]$

\nidt $[4,10,18,-14,2,8,-20,24,22,-12,-6,16]$
\hfill
$[4,10,18,16,2,8,-20,-22,24,-14,-12,6]$

\nidt $[4,10,18,-20,2,22,8,-6,24,-14,12,16]$
\hfill
$[4,10,20,-16,2,18,8,-22,12,24,-14,-6]$
}

\medskip

DT codes for the minimal diagrams, up to flype, of $K12n491$:

\smallskip

{\smaller
\nidt $[6,-12,20,18,24,16,-4,22,8,2,14,10]$
\hfill
$[6,-16,18,22,2,-4,24,20,-10,12,8,14]$

\nidt $[6,16,-18,22,2,-4,24,20,-10,12,8,14]$
\hfill
$[4,10,-16,-20,2,-18,-22,-8,-24,-14,-6,-12]$

\nidt $[6,-10,12,22,16,-18,24,20,-2,4,8,14]$
\hfill
$[6,-10,12,22,16,-18,24,20,2,-4,8,14]$

\nidt $[6,10,-18,22,2,16,24,20,-4,12,8,14]$
\hfill
$[6,-10,-18,22,16,-4,24,20,-2,12,8,14]$

\nidt $[6,-10,18,22,16,-4,24,20,2,12,8,14]$
}

\medskip

DT codes for the minimal diagrams, up to flype, of $K12n501$:

\smallskip

{\smaller
\nidt $[6,-10,22,24,-16,18,20,-2,4,14,12,8]$
\hfill
$[6,-10,22,24,-16,18,20,2,-4,14,12,8]$

\nidt $[6,-10,22,24,16,-18,-20,-4,2,-14,-12,8]$
\hfill
$[6,-10,22,24,16,-18,-20,4,-2,-14,-12,8]$

\nidt $[6,-12,20,24,18,16,-4,22,10,2,14,8]$
\hfill
$[4,10,-16,-22,2,-18,-20,-8,-24,-14,-12,-6]$

\nidt $[6,10,12,-18,4,2,-20,-22,-8,-24,-16,-14]$
\hfill
$[6,10,12,18,4,2,-20,-22,8,-24,-16,-14]$

\nidt $[6,10,12,-18,4,2,20,22,24,-8,16,14]$
\hfill
$[6,-10,12,24,16,-18,22,20,-2,4,14,8]$

\nidt $[6,-10,12,24,16,-18,22,20,2,-4,14,8]$
\hfill
$[6,-10,-14,24,16,-20,-18,22,-2,-12,-4,8]$

\nidt $[6,-10,14,24,16,-20,-18,22,2,-12,-4,8]$
\hfill
$[6,10,-16,24,2,-18,-20,-22,-4,-14,-12,8]$

\nidt $[6,10,16,24,2,-18,-20,-22,4,-14,-12,8]$
\hfill
$[6,-10,16,24,-18,-4,-20,-22,2,-14,-12,8]$

\nidt $[6,-10,-18,24,16,-4,20,22,-2,14,12,8]$
\hfill
$[6,-10,18,24,16,-4,20,22,2,14,12,8]$
}




\end{document}